\newtheorem{satz}{Theorem}
\newtheorem{proposition}[satz]{Proposition}
\newtheorem{theorem}[satz]{Theorem}
\newtheorem{lemma}[satz]{Lemma}
\newtheorem{definition}[satz]{Definition}
\newtheorem{corollary}[satz]{Corollary}
\def\no{\noindent}
\def\eps{\varepsilon}
\def\_phi{\varphi}
\def\a{\alpha}
\def\F{{\mathbb F}}
\def\ov{\overline}
\def\C{{\mathbb C}}
\def\E{\mathsf {E}}
\def\Z_N{{\mathbb Z}_N}
\def\Z{{\mathbb Z}}
\def\Gr{{\mathbf G}}
\def\supp{{\rm supp\,}}
\def\G{\Gamma}
\def\FF{\widehat}
\def\c{\circ}
\def\Cf{{\mathcal C}}
\author{Shkredov I.D.}
\title{ On Heilbronn's exponential sum
\footnote{
This work was supported by grant RFFI NN
06-01-00383, 11-01-00759, Russian Government project 11.G34.31.0053,
Federal Program "Scientific and scientific--pedagogical staff of innovative Russia" 2009--2013
and
grant Leading Scientific Schools N 8684.2010.1.}
}
\date{}
\begin{document}
\maketitle

\begin{center}
 Annotation.
\end{center}

{\it \small
    In the paper
    we prove
    a new upper bound for Heilbronn's exponential sum
    and obtain some applications of our result to distribution of Fermat quotients.
}
\\

\section{Introduction}
\label{sec:introduction}

Let $p$ be a prime number.
Heilbronn's exponential sum is defined by
\begin{equation}\label{def:Heilbronn_sum}
    S(a) = \sum_{n=1}^p e^{2 \pi i \cdot \frac{an^p}{p^2} } \,.
\end{equation}

In papers \cite{H}, \cite{H-K} (see also \cite{Yu})
a nontrivial upper bound for the sum $S(a)$
was obtained.

\begin{theorem}
    Let $p$ be a prime, and $a\neq 0 \pmod p$.
    Then
    $$
        |S(a)| \ll p^{\frac{7}{8}} \,.
    $$
\label{t:H-K}
\end{theorem}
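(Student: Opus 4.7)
The plan is to prove the bound by classical Weyl differencing, exploiting the fact that $\binom{p}{k} \equiv 0 \pmod p$ for $1 \le k \le p-1$. This divisibility forces the first difference of $n^p$ modulo $p^2$ to carry a visible factor of $p$, reducing the problem to a polynomial character sum modulo $p$. Concretely, I would begin from
\[
(n+h)^p - n^p \;\equiv\; h^p + p\, A(n,h) \pmod{p^2},
\]
where $A(n,h) \equiv \sum_{k=1}^{p-1} \frac{(-1)^{k-1}}{k} n^{p-k} h^k \pmod p$ (using $\binom{p}{k}/p \equiv (-1)^{k-1}/k \pmod p$). Replacing $n^{p-k}$ by $n^{1-k}$ via Fermat gives $A(n,h) \equiv n\, L(h/n) \pmod p$ for $n \not\equiv 0 \pmod p$, where $L(y) = \sum_{k=1}^{p-1}(-1)^{k-1}y^k/k$ is the truncated logarithm.

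Next, write
\[
|S(a)|^2 \;=\; \sum_h T(h), \qquad T(h) \;=\; \sum_n e^{2\pi i a((n+h)^p - n^p)/p^2},
\]
so $|T(h)| = |\sum_n e^{2\pi i aA(n,h)/p}|$. After the change of variable $t = h/n$ (for $n,h\neq 0$),
\[
|S(a)|^2 \;\le\; 2p + \sum_{h\neq 0} \Bigl|\sum_{t\neq 0} e^{2\pi i a h M(t)/p}\Bigr|, \qquad M(t) = L(t)/t \pmod p.
\]
A single Cauchy--Schwarz in $h$, followed by orthogonality in the inner sum, then yields
\[
\Bigl(\sum_{h\neq 0}\bigl|\sum_t e^{2\pi i a h M(t)/p}\bigr|\Bigr)^{\!2} \;\le\; p \sum_h \bigl|\sum_t e^{2\pi i ah M(t)/p}\bigr|^{2} \;=\; p^2 N,
\]
where $N = \#\{(t_1,t_2) \in (\F_p^*)^2 : M(t_1) \equiv M(t_2) \pmod p\}$ (using $a\not\equiv 0 \pmod p$). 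Hence $|S(a)|^4 \ll p^2 N$, and the whole task reduces to bounding the number of collisions $N$ of the map $t\mapsto L(t)/t$ on $\F_p^*$.

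The main obstacle, and the core of the argument, is establishing $N \ll p^{3/2}$. Since $M$ is a polynomial of degree $p-2$, B\'ezout and Weil give only $O(p^2)$-type bounds, which are useless. The plan is instead to exploit the structure of the truncated logarithm: the derivative identity $L'(y) \equiv (1-y^{p-1})/(1+y) \pmod p$ shows that $L'$ vanishes on $\F_p^* \setminus \{-1\}$, so every value of $L$ is attained with multiplicity at least two; together with the link to Fermat quotients this controls the collision energy $\sum_c |M^{-1}(c)|^2 \ll p^{3/2}$. Inserting this bound into the chain of inequalities produces $|S(a)|^4 \ll p^{7/2}$, and the theorem follows.
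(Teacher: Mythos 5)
Your reduction is sound as far as it goes, and it is essentially Heath--Brown's original framework: the Weyl differencing, the identity $(n+h)^p-n^p\equiv h^p+p\,nL(h/n)\pmod{p^2}$, the substitution $t=h/n$, and the single Cauchy--Schwarz are all correct, and they do yield $|S(a)|^4\ll p^2N$ with $N=\#\{(t_1,t_2)\in(\F_p^*)^2: L(t_1)/t_1=L(t_2)/t_2\}$. Moreover your target $N\ll p^{3/2}$ is the right one: summing the collision count over $h\neq 0$ and passing to the subgroup $\G=\{m^p\}\subseteq\Z_{p^2}$ shows that $(p-1)N$ is, up to $O(p^2)$, exactly the additive energy $\E(\G)$, so $N\ll p^{3/2}$ is equivalent to the bound $\E(\G)\ll|\G|^{5/2}$ that the present paper records as Corollary \ref{cor:E_l} and which is the actual content of the Heath--Brown--Konyagin theorem.

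The gap is that this key bound is asserted rather than proved, and the mechanism you offer for it does not work. First, the observation that $L'(y)=(1-y^{p-1})/(1+y)$ vanishes on $\F_p^*\setminus\{-1\}$ says nothing useful about the fibers of $M(t)=L(t)/t$: the relevant polynomial is $L(t)-ct$, whose derivative on $\F_p^*\setminus\{-1\}$ equals $-c$, which is \emph{nonzero} for $c\neq 0$; so its roots are simple and no multiplicity gain is available at all. Second, even for $L$ itself (where the roots of $L(t)-c$ genuinely are multiple), the multiplicity-two remark only halves the trivial degree bound, giving $\le(p-1)/2$ preimages per value --- nowhere near the $O(p^{2/3})$ pointwise bound, let alone the second-moment bound $\sum_c|M^{-1}(c)|^2\ll p^{3/2}$. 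What is actually required is the Stepanov-method construction of an auxiliary polynomial vanishing to high order on the solution set (Heath--Brown, Mit'kin), and --- crucially for the exponent $7/8$ rather than $11/12$ --- its mean-value refinement $\sum_{u\in U}\mathcal{F}(u)\ll p^{2/3}|U|^{2/3}$ (Lemma \ref{l:H-K_2/3} above), from which a dyadic decomposition gives the $p^{3/2}$ energy bound. That construction is the entire difficulty of the theorem, and "the link to Fermat quotients controls the collision energy" does not substitute for it.
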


Igor Shparlinski asked to the author about the possibility of an improvement of Theorem \ref{t:H-K}.
The main result of the paper is

\begin{theorem}
    Let $p$ be a prime, and $a\neq 0 \pmod p$.
    Then
    $$
        |S(a)| \ll p^{\frac{59}{68}} \log^{\frac{5}{34}} p \,.
    $$
\label{t:main}
\end{theorem}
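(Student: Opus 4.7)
The plan is a higher-moment bound for $S(a)$, with the moment estimate analyzed in terms of the Fermat quotient $q(n) := (n^p - n)/p \pmod p$ ($1 \le n \le p-1$).

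\emph{Step 1: Arithmetic set-up.} Since $n^p \equiv n + p\, q(n) \pmod{p^2}$, we can write
\[
S(a) \;=\; \sum_{n=1}^{p-1} e\!\left(\tfrac{an}{p^2}\right) e_p\!\bigl(a\, q(n)\bigr) \;+\; O(1),
\qquad e_p(x) := e^{2\pi i x/p}.
\]
For the $2k$-th moment, orthogonality gives
\[
\sum_{a \in \Z/p^2\Z} |S(a)|^{2k} \;=\; p^2 \cdot \mathcal{E}_k,
\]
where $\mathcal{E}_k$ counts tuples $(n_1,\dots,n_{2k}) \in \{1,\dots,p\}^{2k}$ with
\[
n_1^p + \dots + n_k^p \;\equiv\; n_{k+1}^p + \dots + n_{2k}^p \pmod{p^2}.
\]
Splitting this congruence into its mod-$p$ and higher $p$-adic parts (and using the ``logarithmic'' identity $q(mn) \equiv q(m)+q(n) \pmod p$) produces two coupled constraints: an additive relation on the $n_i$'s modulo $p$ and, effectively, a multiplicative relation on $n_1\cdots n_k$ versus $n_{k+1}\cdots n_{2k}$ modulo $p^2$ (up to an ambiguity in $\ker q$).

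\emph{Step 2: Symmetry and pointwise extraction.} The substitution $n \mapsto cn$ for $c \in (\Z/p^2\Z)^*$ shows $S(a) = S(ac^p)$, so $|S(a)|$ is constant on cosets of the subgroup of $p$-th powers in $(\Z/p^2\Z)^*$, each of size $p-1$. Hence
\[
|S(a)|^{2k} \;\le\; \frac{1}{p-1}\sum_{a}|S(a)|^{2k} \;\le\; \frac{p^{2}\,\mathcal{E}_k}{p-1}.
\]
Heath-Brown's exponent $7/8$ corresponds to $k=4$ with the bound $\mathcal{E}_4 \ll p^{6}$ coming from a Stepanov-type argument. To improve it, one needs either a sharper $\mathcal{E}_k$ for that $k$, or a larger $k$ paired with a non-trivial bound.

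\emph{Step 3: Main obstacle --- the energy estimate.} The heart of the matter is to bound $\mathcal{E}_k$ beyond Heath-Brown's level. My plan is to exploit the fact that $q:(\Z/p^2\Z)^* \to \F_p$ is a group homomorphism: the Fermat-quotient constraint translates into a multiplicative equation on products of the $n_i$'s modulo $p^2$, while the complementary constraint remains additive in the $n_i$'s modulo $p$. Thus $\mathcal{E}_k$ is a joint additive/multiplicative energy of the interval $\{1,\dots,p-1\}$ embedded in $(\Z/p^2\Z)^*$, which is precisely the kind of quantity controlled by modern sum-product technology: Rudnev's point-plane incidence bound, or the higher-energy ($E_k$, $T_k$) inequalities from additive combinatorics, should deliver the required saving. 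The denominators $68 = 4\cdot 17$ and $34 = 2\cdot 17$ in the stated exponent strongly suggest the optimal moment is $k = 17$, with $\log^{5/34} p$ arising from a dyadic decomposition and the $(2k)$-th root. Inserting this energy bound into Step~2 and taking $(2k)$-th roots yields $|S(a)| \ll p^{59/68} \log^{5/34} p$.

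The difficulty lies entirely in Step~3: Steps~1 and 2 are routine, whereas establishing a genuinely new bound on the joint additive/multiplicative energy of an interval inside $(\Z/p^2\Z)^*$ --- something that Heath-Brown's Stepanov machinery does not see --- is the essential new input.
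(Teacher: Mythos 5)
Your Steps 1--2 are fine as far as they go (the paper's Corollary \ref{cor:main} uses exactly this invariance of $|S(a)|$ under $a\mapsto ac^p$, packaged as Lemma \ref{l:G-inv_bound_F}), but the entire content of the theorem sits in your Step 3, and there you have only a guess about what the missing estimate should be --- and the guess points in the wrong direction. The paper does \emph{not} take a large moment: it takes $k=2$, i.e.\ the ordinary additive energy $\E(\G)=\mathcal{E}_2$ of the subgroup $\G=\{m^p\}\subseteq\Z_{p^2}$, and the whole point is the new bound $\E(\G)\ll|\G|^{42/17}\log^{10/17}|\G|$ (Theorem \ref{t:Heilbronn_new}), which improves Heath--Brown--Konyagin's $\E(\G)\ll|\G|^{5/2}$. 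Feeding this into $|S(a)|^4\le\frac{1}{|\G|}\sum_b|S(b)|^4=\frac{p^2}{|\G|}\E(\G)$ gives $p^{(17+42)/68}\log^{10/68}p=p^{59/68}\log^{5/34}p$; the $17$ in the denominator comes from the exponent of the energy bound, not from an optimal moment $k=17$. The proof of that energy bound is the genuinely new work: its only distributional input is the Heath--Brown--Konyagin estimate $\sum_{u\in U}\mathcal{F}(u)\ll p^{2/3}|U|^{2/3}$ (equivalently, pointwise and moment bounds on $(\G\circ\G)$, Corollary \ref{cor:E_l}), which is then combined with the eigenvalue/operator inequality of Lemma \ref{l:ineq} applied to a popular dyadic level set $D$ of $(\G\circ\G)$, a Cauchy--Schwarz step against $\E_3(\G)\ll|\G|^3\log|\G|$, and the Fourier bound for $\G$-invariant sets. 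None of this appears in your plan.

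Two further concrete problems. First, your Step 1 miscomputes the lift: $n^p\equiv n+p\,n\,q(n)\pmod{p^2}$, not $n+p\,q(n)$, so the mod-$p^2$ part of the constraint involves $\sum_i n_i q(n_i)$ (plus a carry from the mod-$p$ relation), and $n\mapsto nq(n)$ is \emph{not} a homomorphism; hence the claimed ``multiplicative relation on $n_1\cdots n_k$ versus $n_{k+1}\cdots n_{2k}$'' does not materialize, and the reduction to a ``joint additive/multiplicative energy of an interval'' breaks down at the outset. Second, even granting some such reduction, invoking ``sum--product technology'' or point--plane incidences is not an argument --- no quantitative bound is derived, and it is exactly here that the theorem would have to be proved. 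As written, the proposal establishes only the routine reduction $|S(a)|\ll(p\,\E(\G))^{1/4}$ and leaves the actual theorem unproved.
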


Heilbronn's exponential  sum is connected
(see e.g. \cite{BFKS}, \cite{Chang_Fermat}, \cite{Lenstra}, \cite{OstShp}, \cite{Shp-FermVal}, \cite{Shp-Ihara})
 with so--called {\it Fermat quotients} defined as
$$
    q(n) = \frac{n^{p-1}-1}{p} \,, \quad n\neq 0 \pmod p \,.
$$
Our main result has some applications
to the distribution of such quotients.

By $l_p$ denote the smallest $n$ such that $q(n) \neq 0 \pmod p$.
In \cite{BFKS} an upper bound for $l_p$ was obtained.

\begin{theorem}
    One has
    $$
        l_p \le (\log p)^{\frac{463}{252} + o(1)}
    $$
    as $p\to \infty$.
\end{theorem}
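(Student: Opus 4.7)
The plan is to combine the near--multiplicative structure of Fermat quotients with an equidistribution estimate for $q(n) \pmod p$ deduced from Theorem \ref{t:H-K}, and with standard smooth--number counts.

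\smallskip

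\emph{Reduction to smooth numbers.} From $(mn)^{p-1} = m^{p-1} n^{p-1}$ one verifies the congruence $q(mn) \equiv q(m) + q(n) \pmod p$ for all $(mn,p) = 1$. Hence the set $Z = \{n \ge 1 : (n,p) = 1,\; q(n) \equiv 0 \pmod p\}$ is multiplicatively closed; in particular $l_p$ is prime, and every integer whose prime factors all lie below $l_p$ belongs to $Z$. Writing $\Psi(N, y)$ for the number of $y$-smooth integers in $[1, N]$, one therefore has the lower bound
\[
    |Z \cap [1, N]| \ge \Psi(N, l_p - 1).
\]

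\smallskip

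\emph{Equidistribution via Heilbronn.} By orthogonality,
\[
    |Z \cap [1, N]| = \frac{N}{p}(1 + o(1)) + \frac{1}{p}\sum_{a=1}^{p-1} \sum_{n \le N} e\!\left(\frac{a\, q(n)}{p}\right),
\]
where $e(x) = e^{2\pi i x}$. Via the identity $n^p \equiv n(1 + p\,q(n)) \pmod{p^2}$ the inner sums are close cousins of incomplete Heilbronn sums, so standard completion--of--sums techniques, together with Theorem \ref{t:H-K}, yield an upper bound of the form
\[
    |Z \cap [1, N]| \le \frac{N}{p} + C\, p^{7/8}(\log p)^{A}
\]
for some absolute constants $A, C > 0$.

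\smallskip

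\emph{Optimisation.} Combining the two estimates with de Bruijn's asymptotic $\Psi(N, y) = N u^{-u(1 + o(1))}$, $u = \log N / \log y$, and choosing $N$ as a suitable small power of $p$, forces $u \ll \log p / \log\log p$ and hence $l_p \le (\log p)^{c + o(1)}$ for some explicit $c$. To drive $c$ down to the value $463/252$ one has to strengthen the smooth--number lower bound by further exploiting the multiplicative closure of $Z$: besides $(l_p - 1)$-smooth integers, one also adjoins almost--smooth products involving a bounded number of larger primes that are shown, by an iterated sieve on small primes lying in $Z$, to lie in $Z$; the resulting strata are then balanced against the Heilbronn error term. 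This sieve--theoretic optimisation, rather than any of the purely analytic steps, is the main obstacle; the analytic ingredients (Heilbronn via Theorem \ref{t:H-K}, completion of sums, and de Bruijn's function) are comparatively routine once the reduction to a covering of $Z$ by smooth--type integers is in place.
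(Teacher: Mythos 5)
First, a bookkeeping point: the paper does not prove this statement at all --- it is quoted verbatim from \cite{BFKS}, and the paper's own contribution (Theorem \ref{t:Fermat_quatients}) is the improved exponent $\frac{7829}{4284}$ obtained by feeding Theorem \ref{t:Heilbronn_new} into the \emph{same} argument of \cite{BFKS}. So your proposal has to be measured against that argument, and it falls short in a way that is quantitative, not just cosmetic. Your first step (additivity $q(mn)\equiv q(m)+q(n)$, hence $|Z\cap[1,N]|\ge\Psi(N,l_p-1)$) is correct and is indeed the starting point of \cite{BFKS}. But your second step cannot deliver the exponent $\frac{463}{252}$. Grant your claimed bound $|Z\cap[1,N]|\le N/p+Cp^{7/8}(\log p)^A$ in full. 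With $y=(\log p)^c$ and $N=p^{\theta}$ one has $\Psi(N,y)=N^{1-1/c+o(1)}=p^{\theta(1-1/c)+o(1)}$, and a contradiction requires simultaneously $\Psi(N,y)>N/p$ (forcing $\theta<c$) and $\Psi(N,y)>p^{7/8+o(1)}$ (forcing $\theta(1-1/c)>7/8$). Maximising $\theta(1-1/c)$ over $\theta<c$ gives $c-1>7/8$, i.e.\ $c>15/8=1.875$. Since $\frac{463}{252}=1.8373\ldots<15/8$, your scheme caps out strictly above the exponent you are trying to prove, no matter how $N$ is chosen. The missing ingredient --- which you relegate to a vague ``sieve-theoretic optimisation'' --- is precisely the heart of \cite{BFKS}: one does not count $Z\cap[1,N]$ by orthogonality, but exploits that if $l_p>y$ then $n^p\equiv n\pmod{p^2}$ for every $y$-smooth $n$, so that $\sum e(an^p/p^2)$ restricted to smooth $n\le N$ has no cancellation; this is played off against \emph{bilinear} estimates for that restricted sum, obtained by factoring smooth numbers as $n=dm$ and invoking the incomplete-interval bound $\ll p^{5/8}N^{1/4}$ of Corollary \ref{cor:main} (equivalently the energy bound $\E(\G)\ll|\G|^{5/2}$). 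That is also exactly where the present paper's improvement of $\E(\G)$ enters to lower the exponent further.

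Two secondary problems. (i) Your upper bound for $|Z\cap[1,N]|$ is itself unproven: completing $\sum_{n\le N}e(aq(n)/p)=\sum_{n\le N}e(a(n^{p-1}-1)/p^2)$ modulo its period $p^2$ requires bounds on the twisted complete sums $\sum_{n=1}^{p^2}e((an^{p-1}+bn)/p^2)$ for all $b$, which do not follow from Theorem \ref{t:H-K} as stated (that theorem concerns the untwisted sum $\sum e(an^p/p^2)$). (ii) Your proposal to enlarge the lower bound by adjoining ``almost-smooth products involving a bounded number of larger primes that are shown \dots to lie in $Z$'' is unsound: from $q(\ell m)\equiv q(\ell)+q(m)$ one gets no information about $q(\ell)$ for a prime $\ell\ge l_p$, so such products cannot be placed in $Z$; any attempt to do so by ``sieving for small primes in $Z$'' is circular. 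In short, the multiplicative reduction is right, but the analytic engine you propose is both unjustified in its key estimate and, even if justified, numerically too weak by a definite margin.
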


We improve the constant $\frac{463}{252}$ above, see Theorem \ref{t:Fermat_quatients} of section \ref{sec:proof}.



Another applications are : \\
$\bullet~$ discrepancy of Fermat quotients from \cite{OstShp},  Theorems 18--19, \\
$\bullet~$ new bound for the size of the image of $q(n)$, see \cite{Shp-FermVal}, Theorem 1, \\
$\bullet~$ estimates for Ihara sum, \cite{Shp-Ihara}, \\
$\bullet~$ better bounds for the sums $\sum_{n=1}^k \chi(q(n))$, $\sum_{n=1}^k \chi(nq(n))$,
see \cite{Chang_Fermat}, Theorems 3.1, 3.2, 5.4.

\bigskip

Let us say few  words about the proof.
Clearly, sum (\ref{def:Heilbronn_sum}) can be considered as the sum over
the following multiplicative subgroup
\begin{equation}\label{def:H_Gamma}
    \G = \{ m^p ~:~ 1\le m \le p-1 \} \subseteq \Z/(p^2 \Z) 
\end{equation}
(see the discussion at the beginning of section \ref{sec:preliminaries}).
Recently, some progress in estimating of  exponential sums over "large"\, subgroups
(but in $\Z/p\Z$ not in $\Z/p^2\Z$)
such as
(\ref{def:H_Gamma})
was attained (see \cite{s_ineq}).
So it is natural to try to use the approach from the paper to obtain some new upper bound for (\ref{def:Heilbronn_sum}).
Unfortunately, the methods from \cite{s_ineq} cannot be applied directly in the case.
The reason is that we know much less about
distribution of Heilbronn's subgroup (\ref{def:H_Gamma})
then about
subgroups in $\Z/p\Z$
as well as about looking similar convex--type
sets
(see sections 6, 7 from \cite{s_ineq}).
The only we know is Lemma \ref{l:H-K_2/3} below, which gives, roughly speaking,
a nontrivial
upper bound for the number of the solutions of the equation $x-y \equiv c \pmod {p^2}$
for fixed $c \in \Z/(p^2 \Z)$, $c\neq 0$ and $x,y\in \G$
as well as upper bounds for the moments of such quantities.
Nevertheless the size of $\G$ is large and the ordinary Fourier transformation methods
(see Lemma \ref{l:G-inv_bound_F}), combining with the approach from \cite{s_ineq},
namely, so--called the eigenvalues method
allows us to prove Theorem \ref{t:main}.


The author is grateful to
Igor Shparlinski
for useful discussions as well as pointing some applications of the main result.

\section{Definitions}
\label{sec:definitions}

Let $\Gr$ be an abelian group.
If $\Gr$ is finite then denote by $N$ the cardinality of $\Gr$.
It is well--known~\cite{Rudin_book} that the dual group $\FF{\Gr}$ is isomorphic to $\Gr$ in the case.
Let $f$ be a function from $\Gr$ to $\mathbb{C}.$  We denote the Fourier transform of $f$ by~$\FF{f},$
\begin{equation}\label{F:Fourier}
  \FF{f}(\xi) =  \sum_{x \in \Gr} f(x) e( -\xi \cdot x) \,,
\end{equation}
where $e(x) = e^{2\pi i x}$.
We rely on the following basic identities
\begin{equation}\label{F_Par}
    \sum_{x\in \Gr} |f(x)|^2
        =
            \frac{1}{N} \sum_{\xi \in \FF{\Gr}} \big|\widehat{f} (\xi)\big|^2 \,.
\end{equation}
\begin{equation}\label{svertka}
    \sum_{y\in \Gr} \Big|\sum_{x\in \Gr} f(x) g(y-x) \Big|^2
        = \frac{1}{N} \sum_{\xi \in \FF{\Gr}} \big|\widehat{f} (\xi)\big|^2 \big|\widehat{g} (\xi)\big|^2 \,.
\end{equation}
and
\begin{equation}\label{f:inverse}
    f(x) = \frac{1}{N} \sum_{\xi \in \FF{\Gr}} \FF{f}(\xi) e(\xi \cdot x) \,.
\end{equation}
If
$$
    (f*g) (x) := \sum_{y\in \Gr} f(y) g(x-y) \quad \mbox{ and } \quad (f\circ g) (x) := \sum_{y\in \Gr} f(y) g(y+x)
$$
 then
\begin{equation}\label{f:F_svertka}
    \FF{f*g} = \FF{f} \FF{g} \quad \mbox{ and } \quad \FF{f \circ g} = \FF{f}^c \FF{g} = \ov{\FF{\ov{f}}} \FF{g} \,,
\end{equation}
where for a function $f:\Gr \to \mathbb{C}$ we put $f^c (x):= f(-x)$.
 Clearly,  $(f*g) (x) = (g*f) (x)$ and $(f\c g)(x) = (g \c f) (-x)$, $x\in \Gr$.

We use in the paper  the same letter to denote a set
$S\subseteq \Gr$ and its characteristic function $S:\Gr\rightarrow \{0,1\}.$
Write $\E(A,B)$ for the {\it additive energy} of two sets $A,B \subseteq \Gr$
(see e.g. \cite{tv}), that is
$$
    \E(A,B) = |\{ a_1+b_1 = a_2+b_2 ~:~ a_1,a_2 \in A,\, b_1,b_2 \in B \}| \,.
$$
If $A=B$ we simply write $\E(A)$ instead of $\E(A,A).$
Clearly,
\begin{equation}\label{f:energy_convolution}
    \E(A,B) = \sum_x (A*B) (x)^2 = \sum_x (A \circ B) (x)^2 = \sum_x (A \circ A) (x) (B \circ B) (x)
    \,.
\end{equation}
and by (\ref{svertka}),
\begin{equation}\label{f:energy_Fourier}
    \E(A,B) = \frac{1}{N} \sum_{\xi} |\FF{A} (\xi)|^2 |\FF{B} (\xi)|^2 \,.
\end{equation}

Let
\begin{equation}\label{f:E_k_preliminalies}
    \E_k(A)=\sum_{x\in \Gr} (A\c A)(x)^k \,,
\end{equation}
and
\begin{equation}\label{f:E_k_preliminalies_B}
\E_k(A,B)=\sum_{x\in \Gr} (A\c A)(x) (B\c B)(x)^{k-1}
\end{equation}
be the higher energies of $A$ and $B$.
Similarly, we write $\E_k(f,g)$ for any complex functions $f$ and $g$.
Quantities $\E_k (A,B)$ can be written in terms of generalized convolutions (see \cite{ss_E_k}).

\begin{definition}
   Let $k\ge 2$ be a positive number, and $f_0,\dots,f_{k-1} : \Gr \to \C$ be functions.
Write $F$ for the vector $(f_0,\dots,f_{k-1})$ and $x$ for vector $(x_1,\dots,x_{k-1})$.
Denote by
${\mathcal C}_k (f_0,\dots,f_{k-1}) (x_1,\dots, x_{k-1})$
the function
$$
    \Cf_k(F) (x) =  \Cf_k (f_0,\dots,f_{k-1}) (x_1,\dots, x_{k-1}) = \sum_z f_0 (z) f_1 (z+x_1) \dots f_{k-1} (z+x_{k-1}) \,.
$$
Thus, $\Cf_2 (f_1,f_2) (x) = (f_1 \circ f_2) (x)$.
If $f_1=\dots=f_k=f$ then write
$\Cf_k (f) (x_1,\dots, x_{k-1})$ for $\Cf_k (f_1,\dots,f_{k}) (x_1,\dots, x_{k-1})$.
\end{definition}

For a positive integer $n,$ we set $[n]=\{1,\ldots,n\}$.
All logarithms used in the paper are to base $2.$
By  $\ll$ and $\gg$ we denote the usual Vinogradov's symbols.
If $N$ is a
positive integer then write $\Z_N$ for $\Z/N\Z$ and
if $p$ is a prime then put
$\Z^*_p$ for $(\Z/p\Z) \setminus \{ 0 \}$.

\section{Preliminaries}
\label{sec:preliminaries}


Put
$$
    \G = \{ m^p ~:~ 1\le m \le p-1 \} \subseteq \Z_{p^2} \,.
$$
It is easy to see that $\G$ is a subgroup and that
$$
    \G = \{ x^p ~:~ x\in \Z_{p^2} \,,~ x\neq 0 \} = \{ x \in \Z_{p^2} ~:~ x^p \equiv 1 \pmod {p^2} \}
$$
because of $x\equiv y \pmod p$ implies $x^p \equiv y^p \pmod {p^2}$.

\bigskip

Put
$$
    f(x) = x + \frac{x^2}{2} + \frac{x^3}{3} + \dots + \frac{x^{p-1}}{p-1} \in \Z_p [x] \,.
$$
Put also
$$
    \mathcal{F} (u) = |\{ x\in \Z_p ~:~ f(x) = u \}| \,.
$$
We prove a simple lemma which is connecting the numbers $\mathcal{F} (u)$ and the convolutions of the subgroup $\G$.

\begin{lemma}
    Let $0 \le a,b \le p-1$.
    Then
\begin{displaymath}
(\G \c \G) (a + b p) =\left\{\begin{array}{ll}
\mathcal{F} (aq(a)-b)\,, &\textrm{if~~} a \neq 0\,,\\
|\G|\,, &\textrm{if~~} a=b=0\,, \\
0\,,    &\textrm{otherwise.}
\end{array}\right.
\label{l:G_via_f}
\end{displaymath}
\end{lemma}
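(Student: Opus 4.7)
The plan is to parametrize $\G$ explicitly by $\{1,\ldots,p-1\}$ via the map $m \mapsto m^p \pmod{p^2}$, carefully track the carry that arises when reducing $g_2 - g_1$ in $\Z_{p^2}$, and then recognise the polynomial $f$ through the binomial expansion of $(A+B)^p$ modulo $p^2$.

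By Fermat's little theorem, for $1 \le m \le p-1$ one has $m^p = m + p\,m\,q(m)$ as integers, so modulo $p^2$ every $g \in \G$ has the canonical decomposition $g = m + Q_m p$ with $m \in \{1,\ldots,p-1\}$ and $Q_m \equiv m\,q(m) \pmod p$; in particular $m \mapsto m^p \pmod{p^2}$ is a bijection $\{1,\ldots,p-1\} \to \G$. For $g_i = m_i + Q_{m_i} p$ one computes $g_2 - g_1 \equiv (m_2 - m_1) + p(Q_{m_2} - Q_{m_1}) \pmod{p^2}$, and matching this with $a + bp$ (for $0 \le a,b \le p-1$) forces $m_2 \equiv m_1 + a \pmod p$ and produces a carry $\lambda = \lfloor (m_1+a)/p \rfloor \in \{0,1\}$, so that $b \equiv Q_{m_2} - Q_{m_1} - \lambda \pmod p$. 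The case $a = 0$ forces $m_1 = m_2$ and yields the two boundary values of the lemma immediately.

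For $a \ne 0$, the auxiliary identity $q(m+p) \equiv q(m) - m^{-1} \pmod p$, obtained by expanding $(m+p)^{p-1}$ modulo $p^2$, absorbs the carry into the uniform relation
\begin{equation*}
    Q_{m_2} - \lambda \equiv (m_1 + a)\,q(m_1 + a) \pmod p,
\end{equation*}
valid in both cases $\lambda = 0,1$, where the Fermat quotient on the right is evaluated at the integer $m_1 + a \in \{2,\ldots,2p-2\} \setminus \{p\}$. Using $n\,q(n) = (n^p - n)/p$, the $b$-condition rewrites as
\begin{equation*}
    b \equiv a\,q(a) + \frac{(m_1+a)^p - m_1^p - a^p}{p} \pmod p.
\end{equation*}
Now the classical congruence $\binom{p}{j} \equiv (-1)^{j-1} p/j \pmod{p^2}$, together with Fermat's little theorem, yields
\begin{equation*}
    \frac{(A+B)^p - A^p - B^p}{p} \equiv -A\,f(-B/A) \pmod p
\end{equation*}
for any integers $A,B$ coprime to $p$, with $f$ the polynomial defined in the excerpt. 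Setting $A = m_1$, $B = a$ converts the $b$-condition into $m_1\,f(-a/m_1) \equiv a\,q(a) - b \pmod p$.

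Finally I would make a bijective change of variable taking admissible $m_1 \in \{1,\ldots,p-1\} \setminus \{p-a\}$ to $y \in \Z_p^* \setminus \{1\}$, using the symmetric form of the last displayed identity (obtained by exchanging the roles of $A$ and $B$) to rewrite the equation as $f(y) = u$ for the appropriate $u$ equivalent to $a\,q(a) - b$; the count is then $\mathcal{F}(a\,q(a) - b)$ as claimed. The excluded values $y \in \{0,1\}$ do not contribute because $f(0) = 0$ and $f(1) \equiv 0 \pmod p$ by Wolstenholme's congruence, so they drop out of the count for generic $u$. The main obstacle is the carry bookkeeping in the second paragraph, which is precisely what the ancillary identity $q(m+p) \equiv q(m) - m^{-1}$ is there to handle; once this is in place, the binomial expansion of $(A+B)^p$ forces the polynomial $f$ to appear naturally and the formula follows after a routine substitution.
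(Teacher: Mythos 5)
Your argument is correct in substance and rests on the same engine as the paper's proof --- the congruence $\binom{p}{j}\equiv(-1)^{j-1}p/j\pmod{p^2}$ applied to the binomial expansion of a difference of $p$-th powers, which is what makes the polynomial $f$ appear --- but you reach it by a noticeably longer road. The paper works with congruences modulo $p^2$ throughout: since $m\equiv m'\pmod p$ implies $m^p\equiv (m')^p\pmod{p^2}$, the residues may be taken modulo $p$ from the start, and the substitution $m_1\equiv av$, $m_2\equiv a(v-1)$ reduces everything to the single expansion $v^p-(v-1)^p\equiv 1-pf(v)\pmod{p^2}$. This makes your entire second paragraph --- the integer decomposition $m^p=m+pmq(m)$, the carry $\lambda$, and the auxiliary identity $q(m+p)\equiv q(m)-m^{-1}$ --- unnecessary: it is correct, but it is bookkeeping the problem does not require. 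Your identity $((A+B)^p-A^p-B^p)/p\equiv -Af(-B/A)\pmod p$ is exactly the paper's expansion in the variables $A=a(v-1)$, $B=a$.

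One point deserves a warning. Your penultimate equation $m_1 f(-a/m_1)\equiv aq(a)-b$ becomes, after the symmetric rewriting, $f(-m_1/a)\equiv a^{-1}\big(aq(a)-b\big)$; the bijection $y=-m_1/a$ therefore counts solutions of $f(y)=q(a)-a^{-1}b$, not of $f(y)=aq(a)-b$, and these differ when $a\neq 1$. The phrase ``the appropriate $u$ equivalent to $aq(a)-b$'' conceals this dilation, and a similar remark applies to the excluded values $y\in\{0,1\}$ when the target value is $0$ (you flag this but dismiss it only ``for generic $u$''). To be fair, the paper's own proof stops at $a+bp\equiv a^p(1-pf(v))$ and leaves the identical normalization implicit, and none of this matters downstream: the lemma is only used through sums $\sum_{u\in U}\mathcal{F}(u)$, which are unaffected by the substitution $u\mapsto a^{-1}u$ and by $O(1)$ corrections. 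But if you assert the count is exactly $\mathcal{F}(aq(a)-b)$, you should either check that your change of variable really lands there or record the rescaling explicitly.
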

\begin{proof}
To calculate $(\G \c \G) (a + b p)$ consider the equation
\begin{equation}\label{f:28.08.2012_1}
    m_1^p - m_2^p \equiv a+bp \pmod{p^2} \,,
\end{equation}
where $m_1,m_2\in \G$.
From (\ref{f:28.08.2012_1}) it follows that  $m_1-m_2 \equiv a \pmod p$.
If $a\equiv 0 \pmod p$ then $b\equiv 0 \pmod p$.
Suppose that $a\neq 0 \pmod p$.
Then
$$
    m_1 \equiv av \pmod p \quad \mbox{ and } \quad m_2 \equiv a(v-1) \pmod p
$$
for some $v\neq 0 \pmod p$.
It follows that
$$
    m_1^p - m_2^p \equiv a^p (v^p - (v-1)^p) \equiv a^p \cdot \sum_{l=1}^p (-1)^{l-1} v^{p-l} \binom{p}{l}
        \equiv
            a^p \cdot (1-p f(v)) \pmod{p^2}
$$
and hence
$$
    a+bp \equiv a^p \cdot (1-p f(v)) \pmod{p^2}
$$
This completes the proof.
$\hfill\Box$
\end{proof}

\bigskip

The following lemma was proved in \cite{H-K}.

\begin{lemma}
    Let $U\subseteq \Z_p$ be a set.
    Then
    $$
        \sum_{u\in U} \mathcal{F} (u) \ll p^{2/3} |U|^{2/3} \,.
    $$
\label{l:H-K_2/3}
\end{lemma}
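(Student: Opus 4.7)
The plan is to apply Stepanov's polynomial method, exploiting the special algebraic structure of $f$ over $\F_p$. The crucial structural fact is
$$f'(X) \;=\; \sum_{k=1}^{p-1} X^{k-1} \;=\; \frac{X^{p-1}-1}{X-1} \quad \text{in } \F_p[X],$$
which by Fermat's little theorem vanishes at every $a \in \F_p^* \setminus \{1\}$. Hence $(X-a)^2$ divides $f(X) - f(a)$ in $\F_p[X]$ for every such $a$; moreover, differentiating the geometric-series identity $\sum_{j=0}^{p-1} a^j \equiv 1 \pmod{p}$ (valid for $a \ne 1$) yields $\sum_{j=1}^{p-1} j a^{j-1} \equiv 0 \pmod{p}$, from which a short manipulation gives $f''(a) = a^{-1} \ne 0$. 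The multiplicity is thus exactly two, so $f$ is generically a double cover on $\F_p \setminus \{0,1\}$, and this is what makes a Stepanov-type bound possible.

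Writing $T = \{x \in \F_p \setminus \{0,1\} : f(x) \in U\}$ and $N := |T|$, so that $\sum_u \mathcal{F}(u) = N + O(|U|)$, I would construct an auxiliary polynomial
$$\Phi(X) \;=\; \sum_{i=0}^{I-1} \sum_{j=0}^{J-1} c_{i,j}\, X^i\, f(X)^j \in \F_p[X]$$
with undetermined coefficients $c_{i,j}$, subject to $\Phi$ vanishing to order $\ge M$ at every $x \in T$. Because $f'(x) = 0$ on $T$, the derivatives $\Phi^{(s)}(x)$ for $s < M$ are explicit polynomial expressions in $x$, $u_x := f(x) \in U$, and the known $f''(x) = x^{-1}$; the key structural feature is that the value $u_x$ is confined to the set $U$, which limits the number of effective independent linear conditions and is what allows $|U|$ to enter the final bound. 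Dimension counting produces a nonzero $(c_{i,j})$ for suitable parameter choices, while the degree estimate $\deg \Phi \le I + (p-1)J$ combined with the vanishing-order bound $MN \le \deg \Phi$ gives $MN \le I + (p-1)J$. Balancing the Stepanov parameters with the structural restriction $J \le |U|$ (so $f(x)^j$ for $x \in T$ takes at most $|U|$ distinct values) then yields $N \ll p^{2/3} |U|^{2/3}$, with the $2/3$ exponents emerging from the optimal balance $I \sim (p-1)J$ with $J \sim |U|$.

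The principal technical obstacle is the nontriviality step: after producing a nonzero coefficient vector $(c_{i,j})$ by the dimension count, one must verify that $\Phi(X)$ is not accidentally the zero element of $\F_p[X]/(X^p - X)$. This amounts to ruling out nontrivial algebraic identities of the form $\sum c_{i,j} X^i f(X)^j \equiv 0 \pmod{X^p - X}$ with $i < I$, $j < J$, which relies on the fact that $f$ has degree exactly $p - 1$ without hidden polynomial simplifications. The careful verification of nontriviality, together with the parameter optimization, is the technical heart of the Heath-Brown / Konyagin argument in \cite{H-K}, and it is what locks in the $2/3$ exponent of the final bound.
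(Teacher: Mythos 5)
The paper does not prove this lemma at all --- it is imported verbatim from \cite{H-K} --- so the only question is whether your sketch would stand on its own as a proof, and it would not: it is an outline that defers exactly the steps carrying the content. You correctly identify the structural input: in $\F_p[X]$ one has $f'(X)=1+X+\dots+X^{p-2}=(X^{p-1}-1)/(X-1)$, so $f'$ vanishes on $\F_p\setminus\{0,1\}$ and every such point is a root of $f(X)-f(a)$ of multiplicity exactly two (though note $f''(a)=-1/(a(a-1))$, obtained by differentiating $(X-1)f'(X)=X^{p-1}-1$, not $a^{-1}$; this is harmless since only nonvanishing is used). The auxiliary polynomial $\Phi=\sum c_{i,j}X^i f(X)^j$ is indeed the engine of the Heath--Brown--Konyagin argument. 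But two essential steps are missing. First, the linear algebra is never done: you do not count the linear conditions that order-$M$ vanishing on $T$ imposes once the Hasse derivatives are rewritten as polynomials in the pair $(x,u_x)$ with $u_x\in U$, so the existence of a nonzero coefficient vector for the advertised parameters $I\sim(p-1)J$, $J\sim|U|$, $M\sim(p|U|)^{1/3}$ is asserted rather than proved; the remark that ``$f(x)^j$ takes at most $|U|$ values'' does not by itself convert the naive $MN$ conditions into fewer than $IJ$ conditions, and this count is precisely where such arguments live or die. Second, you explicitly leave open the nontriviality of $\Phi$ modulo $X^p-X$ while calling it ``the technical heart''; a proof that postpones its own heart to the cited reference is a citation, not a proof.

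For completeness: the cleanest way to finish your plan (and essentially how \cite{H-K} organize it) is to decouple $U$ from the polynomial construction. One first proves the level-set estimate $|\{u:\mathcal{F}(u)\ge\Delta\}|\ll p^2\Delta^{-3}$ by Stepanov's method applied to the set $\{x: \mathcal{F}(f(x))\ge \Delta\}$, and then deduces the stated bound by the elementary dyadic summation $\sum_{u\in U}\mathcal{F}(u)\le \Delta_0|U|+\sum_{k\ge0}2^{k+1}\Delta_0\,|\{u:\mathcal{F}(u)\ge 2^k\Delta_0\}|\ll \Delta_0|U|+p^2\Delta_0^{-2}$, optimized at $\Delta_0=(p^2/|U|)^{1/3}$, which gives $p^{2/3}|U|^{2/3}$. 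This reformulation makes the $2/3$ exponents transparent and isolates the polynomial method in a statement not depending on $U$; as written, your direct construction with $U$ built into the exponents of $f(X)^j$ has not been shown to close.
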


Using Lemma \ref{l:G_via_f} and Lemma \ref{l:H-K_2/3},
 one can easily deduce upper bounds for moments of convolution of $\G$.
 These estimates are the same as in the case of multiplicative subgroups in $\Z_p$ (see, e.g. \cite{ss}).

\begin{corollary}
    We have
    \begin{equation}\label{f:E_2_E_3}
        \E(\Gamma) \ll |\Gamma|^{5/2} \,, \quad \E_3 (\G) \ll |\G|^3 \log |\G| \,,
    \end{equation}
    and for all $l\ge 4$ the following holds
    \begin{equation}\label{f:E_l}
        \E_l (\G) = |\G|^l + O(|\G|^{\frac{2l+3}{3}}) \,.
    \end{equation}
\label{cor:E_l}
\end{corollary}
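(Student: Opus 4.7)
The strategy is to reduce each energy $\E_l(\G)$ to a moment of the function $\mathcal{F}$ on $\Z_p$, and then to estimate those moments by a dyadic decomposition driven by Lemma \ref{l:H-K_2/3}.

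First, I would combine Lemma \ref{l:G_via_f} with the observation that for each fixed $a \in \{1,\dots,p-1\}$ the map $b \mapsto aq(a)-b$ is a bijection of $\Z_p$ onto itself. Since the only nonzero values of $(\G \circ \G)(a+bp)$ occur when $a=b=0$ (giving $|\G|$) or when $a\neq 0$ (giving $\mathcal{F}(aq(a)-b)$), one obtains the exact identity
$$
\E_l(\G) \;=\; |\G|^l + (p-1)\cdot M_l, \qquad M_l := \sum_{u \in \Z_p} \mathcal{F}(u)^l,
$$
so everything reduces to estimating $M_l$.

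Next, I would run a dyadic decomposition: set $U_j := \{u \in \Z_p : 2^j \leq \mathcal{F}(u) < 2^{j+1}\}$. Three bounds control the size of $U_j$. From Lemma \ref{l:H-K_2/3} applied to $U_j$ one gets $|U_j| \ll p^2 \cdot 2^{-3j}$. From the trivial identity $\sum_u \mathcal{F}(u) = p$ one gets $|U_j| \ll p \cdot 2^{-j}$. Finally, from Lemma \ref{l:H-K_2/3} applied to a single point one gets the pointwise bound $\mathcal{F}(u) \ll p^{2/3}$, so the dyadic range is $0 \leq j \leq \tfrac{2}{3}\log p + O(1)$. The two upper bounds on $|U_j|$ agree at $2^j \sim p^{1/2}$: below this scale the $L^1$-bound wins, above it the Heilbronn--Konyagin bound wins.

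Plugging this into $M_l \ll \sum_j |U_j|\, 2^{lj}$ and splitting at $2^j = p^{1/2}$ yields the low-range contribution $\sum_{2^j \leq p^{1/2}} p \cdot 2^{(l-1)j}$ and the high-range contribution $\sum_{p^{1/2} \leq 2^j \leq p^{2/3}} p^2 \cdot 2^{(l-3)j}$. For $l=2$ both geometric series balance at the cutoff and give $M_2 \ll p^{3/2}$; for $l=3$ the first gives $p^2$ and the second is $O(\log p)$ dyadic scales each of size $p^2$, giving $M_3 \ll p^2 \log p$; and for $l \geq 4$ both series are dominated at their tops, with the second one (of size $p^2 \cdot p^{2(l-3)/3} = p^{2l/3}$) winning, so $M_l \ll p^{2l/3}$. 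Substituting into the displayed identity and absorbing the minor term $(p-1)^l$ when $l\in\{2,3\}$ gives exactly (\ref{f:E_2_E_3}) and (\ref{f:E_l}).

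The main obstacle is a bookkeeping one: recovering the three sharp regimes $l=2$, $l=3$, $l\geq 4$ from a single dyadic decomposition requires simultaneously using all three estimates on the level sets (the $L^1$-bound, the Heilbronn--Konyagin bound, and the pointwise bound $\mathcal{F}(u) \ll p^{2/3}$) with the cutoff placed at $2^j \sim p^{1/2}$; in particular, without truncating the dyadic sum at $2^j \ll p^{2/3}$, the $l\geq 4$ estimate would give only $p^{l-1}$ instead of the needed $p^{2l/3}$.
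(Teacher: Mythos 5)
Your proof is correct and follows exactly the route the paper indicates (the paper omits the details, saying only that the corollary follows from Lemma \ref{l:G_via_f} and Lemma \ref{l:H-K_2/3} as in the multiplicative-subgroup case): the identity $\E_l(\G)=|\G|^l+(p-1)\sum_u \mathcal{F}(u)^l$ via the bijection $b\mapsto aq(a)-b$, followed by the dyadic level-set analysis using the $L^1$ bound, the Heath--Brown--Konyagin bound, and the pointwise bound $\mathcal{F}(u)\ll p^{2/3}$, is precisely the intended argument, and your case analysis at the cutoff $2^j\sim p^{1/2}$ checks out for $l=2$, $l=3$ and $l\ge 4$.
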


We need in a
lemma about Fourier coefficients of an arbitrary set
which is invariant under the  action of a subgroup
(see e.g. \cite{ss}).

\begin{lemma}
        Let $\G \subseteq \Z_{N}$ be a multiplicative subgroup,
        and $Q$ be an $\G$--invariant subset of $\Z_{N}$,
        that is $Q\G=Q$.
        Then for any $\xi \neq 0$ the following holds
\begin{equation}\label{f:G-inv_bound_F}
    | \FF{Q} (\xi) | \le \min \left\{ \left(\frac{|Q|N}{|\G|}\right)^{1/2} \,, \frac{|Q|^{3/4} N^{1/4} \E^{1/4} (\G)}{|\G|} \,,
                            N^{1/8} \E^{1/8} (\G) \E^{1/8} (Q) \left(\frac{|Q|}{|\G|}\right)^{1/2} \right\} \,.
\end{equation}
\label{l:G-inv_bound_F}
\end{lemma}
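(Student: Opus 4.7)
The plan is to decompose $Q$ as a disjoint union of $\G$--orbits, $Q = \bigsqcup_{i=1}^{k} c_i \G$ with $k = |Q|/|\G|$ (every nonzero $\G$--orbit in $\Z_{p^2}$ has full size $|\G|$, because any $g = m^p \in \G$ with $g \equiv 1 \pmod p$ forces $m \equiv 1 \pmod p$ and then $g = (1+pt)^p \equiv 1 \pmod{p^2}$, so all stabilisers are trivial). Using $\FF{c\G}(\xi) = \FF\G(c\xi)$ this gives $\FF Q(\xi) = \sum_i \FF\G(c_i \xi)$, and the Cauchy--Schwarz inequality together with the $\G$--invariance of $|\FF\G|^2 = \FF{\G \c \G}$ yields
$$
    |\FF Q(\xi)|^2 \;\le\; \frac{|Q|}{|\G|^2}\, S(\xi) \,, \qquad S(\xi) \;:=\; \sum_c Q(c)\,|\FF\G(c\xi)|^2 \,.
$$
Expanding $|\FF\G|^2 = \FF{\G\c\G}$ and swapping the order of summation gives the key Parseval identity
$$
    S(\xi) \;=\; \sum_z (\G\c\G)(z)\,\FF Q(\xi z) \,,
$$
which is symmetric in the roles of $\G$ and $Q$ up to the twist by $\xi$.

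The first bound is separate and follows from applying the same orbit--Parseval idea directly to $|\FF Q|^2$: since $|\FF Q|^2$ is itself $\G$--invariant and the orbit of $\xi$ has size $|\G|$, one has $|\G|\,|\FF Q(\xi)|^2 \le \sum_\eta |\FF Q(\eta)|^2 = N|Q|$. The second bound follows from a plain Cauchy--Schwarz on the key identity, $|S(\xi)|^2 \le (\sum_z (\G\c\G)(z)^2)(\sum_z |\FF Q(\xi z)|^2) \le \E(\G) \cdot N|Q|$, using (\ref{f:energy_convolution}) and Parseval; substituting back produces the second bound.

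For the third bound, apply a weighted Cauchy--Schwarz that preserves the density $(\G\c\G)(z)$:
$$
    |S(\xi)|^2 \;\le\; |\G|^2 \sum_z (\G\c\G)(z)\,|\FF Q(\xi z)|^2 \;=\; |\G|^2 \sum_y (Q\c Q)(y)\,|\FF\G(\xi y)|^2 \,,
$$
where the last equality is the key identity applied in reverse, exchanging the roles of $\G$ and $Q$. A final Cauchy--Schwarz bounds the right--hand side by $|\G|^2 \E(Q)^{1/2}(N\E(\G))^{1/2}$, and collecting powers gives the third bound.

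The main obstacle I anticipate is precisely the weighted Cauchy--Schwarz at the start of bound 3: it is this move that couples $\E(\G)$ with $\E(Q)$ and produces the eighth--root exponent, and it crucially exploits the $\G$--$Q$ symmetry of the key identity (together with the fact that $|\FF\G|^2$ and $Q \c Q$ are both $\G$--invariant, so both Parseval swaps stay inside the $\G$--invariant world).
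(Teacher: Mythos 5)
Your argument is the standard proof of this lemma --- the paper itself gives no proof, deferring to \cite{ss} --- and your chain (coset decomposition, Cauchy--Schwarz, the identity $S(\xi)=\sum_z(\G\c\G)(z)\FF{Q}(\xi z)$, then a plain respectively weighted Cauchy--Schwarz) does produce all three bounds with the correct exponents. Everything you wrote is valid whenever $\xi$ is invertible modulo $N$; in particular the lemma is completely proved for prime $N$, which is the setting of the cited reference.

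There is, however, a gap at the two Parseval steps when $N$ is composite, and $N=p^2$ is exactly the case this paper uses. You noticed the corresponding subtlety for the first bound (trivial stabilisers, so the orbit $\xi\G$ has full size even when $p\mid\xi$), but the inequalities $\sum_z|\FF{Q}(\xi z)|^2\le N|Q|$ and $\sum_y|\FF{\G}(\xi y)|^4\le N\E(\G)$ require the dilation $z\mapsto\xi z$ to be injective, and for $\xi=p\xi_0\neq 0$ it is $p$-to-one onto $p\Z_{p^2}$. Concretely, if $Q$ is a union of $k$ cosets of units, then $\sum_z|\FF{Q}(p\xi_0 z)|^2=p^2(p-1)k^2$, which exceeds $N|Q|=p^2(p-1)k$ as soon as $k>1$, so that intermediate inequality is false for such $\xi$. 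This case cannot be discarded: in the proof of Theorem \ref{t:Heilbronn_new} the lemma is applied to every nonzero frequency of $\Z_{p^2}$, and multiples of $p$ do occur there. Fortunately the repair is short and should be added as a separate case: for $\xi=p\xi_0$ with $\xi_0$ a unit, each coset $c\G$ of a unit $c$ reduces modulo $p$ bijectively onto $c\,\Z^*_p$ (since $m^p\equiv m\pmod p$), hence $\FF{Q}(p\xi_0)=-|Q|/|\G|$ whenever $Q$ consists of units (as the sets $D$ and $S_i$ in the application do), and this value satisfies all three claimed bounds with a large margin.
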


Finally, we formulate à lemma from \cite{s_ineq}, see Theorem 57, section 7.
This is the key new ingredient of our proof.

\begin{lemma}
    Let $A, D\subseteq \Gr$ be two sets, and $D=-D$.
    Then
$$
    \mu^2 \cdot \E(A,f) \le \sum_{x,y,z\in A} D (x-y) D (x-z) (A\c A) (y-z) \,,
$$
where
$$
    \mu \ge \frac{1}{|A|} \sum_{x\in D} (A\c A) (x) \,,
$$
and $f$ is a nonnegative function such that $\| f\|_2 = 1$, $\supp f \subseteq A$, and
$$
    \mu f(x) = A(x) (D * f) (x) \,.
$$
\label{l:ineq}
\end{lemma}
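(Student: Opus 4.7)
The idea is to recast $\mu^2\cdot\E(A,f)$ as a quadratic form $\langle f,Bf\rangle$ on functions supported in $A$ whose kernel $B$ turns out to be positive semidefinite, and then close the bound by the standard PSD inequality $\langle f,Bf\rangle\le\lambda_{\max}(B)\le\tr B$, recognizing $\tr B$ as precisely the right-hand side.

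To extract this quadratic form, I iterate the eigenvalue identity $\mu f(y)=A(y)(D*f)(y)$ at two points:
$$
\mu^2 (f\c f)(t)=\sum_y (\mu f)(y)(\mu f)(y+t)=\sum_y A(y) A(y+t)(D*f)(y)(D*f)(y+t).
$$
Multiplying by $(A\c A)(t)$, summing in $t$, and substituting $s=y+t$, then using the identity $\E(A,f)=\sum_t(A\c A)(t)(f\c f)(t)$, one obtains
$$
\mu^2 \E(A,f)=\sum_{y,s\in A}(A\c A)(s-y)(D*f)(y)(D*f)(s).
$$
Since $D=-D$ and $\supp f\subseteq A$, the expansion $(D*f)(y)=\sum_w A(w) D(y-w) f(w)$ rewrites the last display as $\mu^2\E(A,f)=\sum_{w,w'}f(w) f(w') B_{w,w'}$, with
$$
B_{w,w'}:=\sum_{y,s\in A}(A\c A)(s-y) D(y-w) D(s-w').
$$

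The crux is the PSD-ness of $B$. Set $g_w(y):=A(y) D(y-w)$ and note that $B_{w,w'}=\sum_y g_w(y)\bigl((A\c A)*g_{w'}\bigr)(y)$; by Parseval this equals $N^{-1}\sum_\xi|\FF{A}(\xi)|^2\,\ov{\FF{g_w}(\xi)}\FF{g_{w'}}(\xi)$, exhibiting $B$ as a Gram matrix (the Fourier reason being that $A\c A$ is a PSD convolution kernel since $\widehat{A\c A}=|\FF A|^2\ge 0$). Hence $B\succeq 0$, and combining with $\|f\|_2=1$,
$$
\mu^2 \E(A,f)=\langle f, Bf\rangle\le \lambda_{\max}(B)\le \tr B=\sum_{x\in A}\sum_{y,s\in A} D(y-x) D(s-x)(A\c A)(s-y).
$$
Applying $D(y-x)=D(x-y)$ and $(A\c A)(s-y)=(A\c A)(y-s)$ recasts this as the claimed $\sum_{x,y,z\in A} D(x-y) D(x-z)(A\c A)(y-z)$. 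The only non-routine step is the verification that $B$ is positive semidefinite (via the Fourier positivity of $A\c A$); once that is in hand the spectral bound $\lambda_{\max}\le\tr$ for PSD matrices finishes the proof.
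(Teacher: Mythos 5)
Your argument is correct, and since the paper does not prove this lemma at all (it is imported verbatim from \cite{s_ineq}, Theorem 57), there is no in-paper proof to compare against; what you give is essentially the standard spectral argument behind that result. Iterating the eigenvalue relation to get $\mu^2\E(A,f)=\sum_{y,s\in A}(A\c A)(s-y)(D*f)(y)(D*f)(s)$ is right, the kernel $B_{w,w'}$ is indeed symmetric (this uses $D=-D$ and the evenness of $A\c A$), your Fourier/Gram identity $B_{w,w'}=N^{-1}\sum_\xi |\FF{A}(\xi)|^2\,\ov{\FF{g_w}(\xi)}\FF{g_{w'}}(\xi)$ checks out, and $\langle f,Bf\rangle\le\lambda_{\max}(B)\le\tr B$ for the PSD principal submatrix indexed by $A$ gives exactly the claimed right-hand side. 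Equivalently, your $B$ is $M\widetilde{A}M$ with $M_{x,y}=A(x)D(x-y)A(y)$ and $\widetilde{A}_{y,z}=(A\c A)(y-z)$, and the bound is $\tr(M\widetilde{A}M)\ge\mu^2\langle\widetilde{A}f,f\rangle$; this is the form in which the source paper phrases it, so the two routes are the same computation.

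One small omission worth flagging: you treat $\mu$ and $f$ as given, but the lemma also asserts that such a pair exists with $f\ge 0$, $\|f\|_2=1$, $\supp f\subseteq A$ and $\mu\ge |A|^{-1}\sum_{x\in D}(A\c A)(x)$, and the application in Section 4 genuinely uses this lower bound on $\mu$ (it becomes $\mu\ge\sigma/t$) as well as the nonnegativity of $f$ (to identify $f$ with the principal character on $\G$). This part is routine --- take $\mu$ to be the largest eigenvalue of the symmetric nonnegative matrix $M_{x,y}=A(x)D(x-y)A(y)$, take $f$ to be a Perron eigenvector, and bound $\mu$ from below by the Rayleigh quotient at $A/|A|^{1/2}$, which equals $|A|^{-1}\sum_{x,y\in A}D(x-y)=|A|^{-1}\sum_{x\in D}(A\c A)(x)$ --- but it should be stated to make the proof of the lemma as used complete.
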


\section{The proof of the main result}
\label{sec:proof}


Let $\G$ be the subgroup from (\ref{def:H_Gamma}).

\begin{theorem}
    We have
\begin{equation}\label{f:Heilbronn_new}
    \E (\G) \ll |\G|^{\frac{42}{17}} \log^{\frac{10}{17}} |\G| \,.
\end{equation}
\label{t:Heilbronn_new}
\end{theorem}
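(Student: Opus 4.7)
The plan is to combine a dyadic decomposition of $\G \circ \G$ with the eigenvalue inequality of Lemma~\ref{l:ineq} and the Fourier estimates for $\G$-invariant sets (Lemma~\ref{l:G-inv_bound_F}), following the scheme of \cite{s_ineq}.

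First I would write $\E(\G) = \sum_x (\G \circ \G)(x)^2$ and decompose it by the dyadic level sets $P_k := \{x \neq 0 : 2^{k-1} < (\G \circ \G)(x) \le 2^k\}$, so that $\E(\G) \ll |\G|^2 + \log|\G| \cdot \max_k 2^{2k}|P_k|$. Since $\G$ is a multiplicative subgroup of $\Z_{p^2}^*$, the convolution $\G \circ \G$ is symmetric and $\G$-invariant, and each $P_k$ inherits both properties. The task therefore reduces to bounding $2^{2k}|P_k|$ uniformly in $k$.

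Next, apply Lemma~\ref{l:ineq} with $A = \G$ and $D = P_k$. The lower bound on $\mu$ gives $\mu \gtrsim 2^k |P_k|/|\G|$, and the trivial estimate $\E(\G, f) \ge (\G \circ \G)(0)(f \circ f)(0) = |\G|$ combines to yield
\[
\frac{2^{2k}|P_k|^2}{|\G|} \ll R := \sum_{u, v} P_k(u)\, P_k(v)\,(\G \circ \G)(v - u)\, \Cf_3(\G)(u, v),
\]
after the change of variables $u = x - y$, $v = x - z$ in the right--hand side of Lemma~\ref{l:ineq}. To estimate $R$, apply Cauchy--Schwarz in $(u, v)$ separating $(\G \circ \G)(v - u)$ from $\Cf_3(\G)(u, v)$. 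The crucial identity
\[
\sum_{u, v}\Cf_3(\G)(u, v)^2 = \sum_w (\G \circ \G)(w)^3 = \E_3(\G) \ll |\G|^3 \log |\G|
\]
comes essentially for free, plugging in the bound on $\E_3(\G)$ from Corollary~\ref{cor:E_l}. The remaining factor $\sum_w (P_k \circ P_k)(w)(\G \circ \G)(w)^2$ is then controlled by Parseval together with Lemma~\ref{l:G-inv_bound_F}, which provides three competing bounds on $|\widehat{P_k}(\xi)|$ for $\xi \neq 0$ in terms of $|P_k|$, $N$, $|\G|$, $\E(\G)$ and $\E(P_k)$.

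Finally, one combines the resulting upper bound on $2^{2k}|P_k|$ with the Heilbronn--Konyagin dyadic estimate $2^{2k}|P_k| \ll |\G|^{5/2}$ (which follows from $\E(\G) \ll |\G|^{5/2}$ and Lemma~\ref{l:G_via_f}) and optimizes over $k$. The main obstacle is precisely this optimization: the three Fourier bounds in Lemma~\ref{l:G-inv_bound_F}, the HK first--moment bound, and the eigenvalue bound each have different dependencies on $|P_k|$ and $|\G|$, and one has to pair the correct one against the HK estimate in order to balance all exponents. A careful balancing should produce the rational exponent $42/17$ and the logarithmic factor $\log^{10/17}|\G|$, where the log power is tracked through $\E_3(\G)^{1/2}$ together with the dyadic $\log |\G|$ absorbed at the end.
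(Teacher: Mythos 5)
Your overall architecture is the paper's: pass to a popular dyadic level set $D$ of $\G\circ\G$, apply Lemma~\ref{l:ineq} with $A=\G$, remove the diagonal, Cauchy--Schwarz against $\Cf_3(\G)$ using $\sum_{u,v}\Cf_3(\G)(u,v)^2=\E_3(\G)\ll|\G|^3\log|\G|$, and finish with the level sets of $\G\circ\G$, Lemma~\ref{l:H-K_2/3} and the Fourier bounds of Lemma~\ref{l:G-inv_bound_F}. However, there is one concrete step where you deviate, and it is fatal: you discharge the factor $\E(\G,f)$ on the left--hand side of Lemma~\ref{l:ineq} by the trivial bound $\E(\G,f)\ge(\G\circ\G)(0)(f\circ f)(0)=|\G|$. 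The whole point of the eigenvalue method here is that this factor can be evaluated exactly: since $D=P_k$ is $\G$--invariant, any solution of $\mu f(x)=\G(x)(D*f)(x)$ is a character on the subgroup $\G$, and nonnegativity of $f$ forces the principal character, i.e.\ $f=\G/|\G|^{1/2}$, whence $\E(\G,f)=\E(\G)/|\G|$. This exceeds your $|\G|$ by the factor $\E(\G)/|\G|^2$, which in the critical regime $\E(\G)\sim|\G|^{42/17}$ is $|\G|^{8/17}$ --- far too large to absorb.

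Quantitatively: writing $t=|\G|$, $\E=\E(\G)$, the paper's identification gives the left--hand side $2^{2j}\E^5/(l^2t^9)$ before Cauchy--Schwarz and ultimately $\E^{17/2}\ll t^{21}\log^5 t$, i.e.\ the claimed $\E\ll t^{42/17}\log^{10/17}t$. Running your version of the same computation, the left--hand side is only $2^{2j}\E^4/(l^2t^7)$, and the endgame yields $\E^{13/2}\ll t^{17}\log^5 t$, i.e.\ $\E\ll t^{34/13}\log^{10/13}t$. Since $34/13>5/2$, this is weaker than the bound $\E(\G)\ll|\G|^{5/2}$ already recorded in Corollary~\ref{cor:E_l}, so the argument as proposed produces nothing. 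The missing idea is precisely the exact determination of the eigenfunction $f$ from the $\G$--invariance of $D$; once you insert $\E(\G,f)=\E(\G)/|\G|$, the rest of your plan (including the three--way optimization over the $S_i$ against the Heath-Brown--Konyagin bound) goes through as in the paper.
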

\begin{proof}
Let
$P=p^2$,
$t=|\G|$, $\E = \E(\G) = t^3/K$, $K\ge 1$, and $\E_3 = \E_3 (\G)$.
By Lemma \ref{l:H-K_2/3} and simple average arguments, we have
\begin{equation}\label{tmp:27.08.2012_1}
    \E \ll \sum_{x \neq 0 ~:~ 2^{-1} tK^{-1} < (\G \c \G) (x) \le cK } (\G\c \G)^2 (x) \,,
\end{equation}
where $c>0$ is some absolute constant.
Let
$$
    D_j = \{ x \neq 0 ~:~ c2^{-j} K < (\G \c \G) (x) \le c2^{-j+1} K \} \,, \quad j\le \log (2cK^2 t^{-1}) \,.
$$
Put $l= [\log (2c K^2 t^{-1})]$.
By (\ref{tmp:27.08.2012_1}) there is $j \in [l]$ such that
\begin{equation}\label{tmp:27.08.2012_2}
    \frac{2^{j} \E}{lK} \ll \sum_{x\in D_j} (\G \c \G) (x) \,.
\end{equation}
Put $D=D_j$.
Clearly, $D=-D$ and $D$ is $\G$--invariant set.
Note also
\begin{equation}\label{f:D_upper}
    |D| \ll \frac{2^{j} \E}{t} \,.
\end{equation}
Put
$$
    \sigma := \sum_{x\in D} (\G \c \G) (x)
$$
Using Lemma \ref{l:ineq} with $A=\G$, $D=D$ and also inequality (\ref{tmp:27.08.2012_2}), we obtain
$$
    \frac{2^{2j} \E^2}{l^2 K^2 t^2} \cdot \E(\G,f)
        \ll
                \frac{\sigma^2}{t^2} \cdot \E(\G,f)
                    \ll
            \mu^2 \cdot \E(\G,f)
                \le \sum_{x,y,z\in \G} D (x-y) D (x-z) (\G\c \G) (y-z) \,,
$$
where
\begin{equation}\label{f:eigen_G_D}
    \mu f(x) = \G(x) (D * f) (x) \,.
\end{equation}
Because of $D$ is $\G$--invariant it is easy to
see that any solution $f$ of equation (\ref{f:eigen_G_D}) coincide with a character on $\G$.
We know that $f(x) \ge 0$, so $f$
is
the main character, i.e.
$f(x) =\G(x)/ t^{1/2}$
(for more details see \cite{ss_E_k} or \cite{s_ineq}).
Thus
\begin{equation}\label{tmp:27.08.2012_3}
    \frac{2^{2j} \E^5}{l^2 t^9} = \frac{2^{2j} \E^3}{l^2 K^2 t^3}
            \ll
                \frac{\sigma^2}{t^2} \cdot \E(\G,f)
        \ll \sum_{x,y,z\in \G} D (x-y) D (x-z) (\G\c \G) (y-z) \,.
\end{equation}
If $y=z$ then by (\ref{tmp:27.08.2012_2}), (\ref{tmp:27.08.2012_3})
and the definition of $\sigma$ the following holds
$$
    \frac{2^{2j} \E^5}{l^2 t^9}
        \ll
            \frac{2^{j} \E^3}{l t^6} \sigma
                \ll
            t \sigma
$$
and we are done.
Thus
$$
    \frac{2^{2j} \E^5}{l^2 t^9}
        \ll
            \sum_{\a \neq \beta} D (\a) D (\beta) (\G\c \G) (\a-\beta) \Cf_3 (\G) (\a,\beta) \,.
$$
Using Cauchy--Schwarz,
 we get
\begin{equation}\label{tmp:26.08.2012_0}
   \frac{2^{4j} \E^{10}}{l^4 t^{18} \E_3}
    \ll
        \sum_{\a \neq \beta} D (\a) D (\beta) (\G\c \G)^2 (\a-\beta) \,.
\end{equation}
Put
$$
    S_i = \{ x\neq 0 ~:~ c' t^{2/3} 2^{-i} < (\G\c \G) (x) \le c' t^{2/3} 2^{-i+1} \} \,,
$$
where $c'>0$ is an absolute constant such that $(\G\c \G) (x) \le c' t^{2/3}$ for all $x\neq 0$.
Such constant exists by Lemma \ref{l:H-K_2/3}.
Trivially
\begin{equation}\label{tmp:27.08.2012_4}
    |S_i| t^{2/3} 2^{-i} c' \le t^2 \,,
\end{equation}
and
\begin{equation}\label{tmp:27.08.2012_4'}
    |S_i| t^{4/3} 2^{-2i} (c')^2 \le \E \,.
\end{equation}
By the definition of the sets $S_i$, we have
$$
    \frac{2^{4j} \E^{10}}{l^4 t^{18} \E_3}
        \ll
            t^{4/3} \sum_i 2^{-2i} \sum_x S_i(x) (D\c D) (x) \,.
$$
Certainly, each set $S_i$ is $\G$--invariant.
Thus, using Lemma \ref{l:G-inv_bound_F}, Fourier transform and Parseval, we obtain
\begin{equation}\label{tmp:27.08.2012_5-}
    \frac{2^{4j} \E^{10}}{l^4 t^{18} \E_3}
        \ll
            t^{1/3} \sum_i 2^{-2i} |S_i|^{3/4} |D| P^{1/4} \E^{1/4} + t^{4/3} \sum_i 2^{-2i} \frac{|S_i||D|^2}{P} \,.
\end{equation}
Let us estimate the second term from (\ref{tmp:27.08.2012_5-}).
Using (\ref{tmp:27.08.2012_4'}) and (\ref{f:D_upper}), we get
$$
    t^{4/3} \sum_i 2^{-2i} \frac{|S_i||D|^2}{P}
        \ll
            \frac{2^{2j} \E^3}{t^2 P} \,.
$$
If
$$
    \frac{2^{4j} \E^{10}}{l^4 t^{18} \E_3}
        \ll
            \frac{2^{2j} \E^3}{t^2 P}
$$
then $\E \ll t^{17/7} \log^{5/7} t$ and the result follows.
Thus
\begin{equation}\label{tmp:27.08.2012_5}
    \frac{2^{4j} \E^{10}}{l^4 t^{18} \E_3}
        \ll
            t^{1/3} \sum_i 2^{-2i} |S_i|^{3/4} |D| P^{1/4} \E^{1/4} \,.
\end{equation}
By Lemma \ref{l:H-K_2/3} and
inequalities
(\ref{tmp:27.08.2012_4}), (\ref{tmp:27.08.2012_4'}), we have
$$
    |S_i| \ll \min \{t2^{3i},  \E t^{-4/3} 2^{2i}, t^{4/3} 2^i \} \,.
$$
Applying the first  bound for $2^i \ll \E t^{-7/3}$,
the second one for $\E t^{-7/3} \ll 2^i \ll t^{8/3} \E^{-1} $ and the third
bound
for other $i$,
we get by (\ref{tmp:27.08.2012_5})
$$
    \frac{2^{4j} \E^{10}}{l^4 t^{18} \E_3}
        \ll
            t^{1/3} |D| P^{1/4} \E^{1/4}
                \left( t^{3/4} \left( \frac{\E}{t^{7/3}} \right)^{1/4}
                        + \frac{\E^{3/4}}{t} \cdot \left( \frac{t^{7/3}}{\E} \right)^{1/2}
                        + t \left( \frac{\E}{t^{8/3}} \right)^{5/4} \right)
                        =
$$
$$
    =
        t^{1/3} |D| P^{1/4} \E^{1/4} \left( \E^{1/4} t^{1/6} + \E^{5/4} t^{-7/3} \right)
            =
                t^{1/2} |D| P^{1/4} \E^{1/2} \left( 1 + \frac{\E}{t^{5/2}} \right)
                    \ll
                        t^{1/2} |D| P^{1/4} \E^{1/2} \,,
$$
where Corollary \ref{cor:E_l} has been used.
Applying the last bound and inequality (\ref{f:D_upper}) after some calculations,
 we obtain (\ref{f:Heilbronn_new}).
This completes the proof.
$\hfill\Box$
\end{proof}

\begin{corollary}
    Let $p$ be a prime, $a\neq 0 \pmod p$,
    and $M,N$ be positive integers, $N\le p$.
    Then
    \begin{equation}\label{f:main_progr}
        \left| \sum_{n=M}^{N+M} e\left( \frac{an^p}{p^2} \right) \right|
            \ll
                p^{\frac{21}{34}} N^{\frac{1}{4}} \log^{\frac{5}{34}} p \,.
    \end{equation}
    In particular
    \begin{equation}\label{f:main_S}
        |S(a)| \ll p^{\frac{59}{68}} \log^{\frac{5}{34}} p \,.
    \end{equation}
\label{cor:main}
\end{corollary}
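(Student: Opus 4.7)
My first move is to pass from the energy bound to a sup-norm Fourier bound on $\G$. The map $n \mapsto n^p \pmod{p^2}$ is a bijection from $\{1,\ldots,p-1\}$ onto $\G$, so $S(a) = \FF{\G}(-a) + O(1)$. Since $\G$ is trivially $\G$-invariant, I apply Lemma \ref{l:G-inv_bound_F} with $Q = \G$; its third minimum reduces in this case to $|\FF{\G}(\xi)| \le p^{1/4}\E(\G)^{1/4}$ for $\xi$ coprime to $p$. Substituting the bound $\E(\G) \ll |\G|^{42/17}\log^{10/17}|\G|$ from Theorem \ref{t:Heilbronn_new} and using $|\G| = p-1$ yields $|S(a)| \ll p^{59/68}\log^{5/34}p$, which is (\ref{f:main_S}).

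\textbf{Stage 2: completion of sums.} Next I handle the partial sum. Since $(n+p)^p \equiv n^p \pmod{p^2}$, the summand is $p$-periodic in $n$, so for $N \le p$ the sum in (\ref{f:main_progr}) equals $T = \sum_{m \in I'} e(am^p/p^2)$, where $I' \subseteq \Z_p$ is the reduction of $[M, M+N]$ modulo $p$ (an interval of length $\le N$). Expanding $\mathbf{1}_{I'}$ in $\Z_p$-characters,
\[
T = \frac{1}{p}\sum_{\xi \in \Z_p}\FF{\mathbf{1}_{I'}}(\xi)\,V_\xi,\qquad V_\xi := \sum_{m=1}^{p-1} e((am^p + p\xi m)/p^2).
\]
The critical identity $V_\xi = \overline{\FF{\G}(a + p\xi)}$ follows from the congruence $(a+p\xi)m^p - (am^p + p\xi m) = p\xi(m^p - m) \equiv 0 \pmod{p^2}$, which is Fermat's little theorem in disguise. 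Since $a + p\xi$ is always coprime to $p$, Stage 1 gives $|V_\xi| \ll p^{59/68}\log^{5/34}p$ uniformly in $\xi$.

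\textbf{Stage 3: H\"older bound and the main obstacle.} To extract the $N^{1/4}$ dependence, I use H\"older with dual exponents $(4/3, 4)$: $|T| \le p^{-1}\|\FF{\mathbf{1}_{I'}}\|_{4/3}\|V\|_4$. The Fej\'er-kernel estimate for an interval gives $\|\FF{\mathbf{1}_{I'}}\|_{4/3} \ll p^{3/4}N^{1/4}$. For the $L^4$ norm of $V$, using $|\FF{\G}(\eta)|^2 = \FF{\G\c\G}(\eta)$ and orthogonality in $\xi$,
\[
\|V\|_4^4 = p\sum_{\substack{y_1, y_2 \in \G - \G\\ y_1 \equiv y_2\,(\mathrm{mod}\,p)}}(\G\c\G)(y_1)(\G\c\G)(y_2)\,e(a(y_2-y_1)/p^2),
\]
whose diagonal $y_1 = y_2$ contributes exactly $p\E(\G)$. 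The hard part is bounding the off-diagonal also by $O(p\E(\G)\log^{O(1)}p)$: a naive triangle inequality loses a factor of $p$, so one must exploit cancellation in the character sum using the rigidity that distinct elements of $\G$ lie in distinct residue classes mod $p$ (whence $\G - \G \setminus\{0\}$ is coprime to $p$) together with Lemma \ref{l:H-K_2/3}. Granting this, $\|V\|_4 \ll (p\E(\G))^{1/4}\log^{O(1)}p$, and H\"older gives
\[
|T| \ll p^{-1}\cdot p^{3/4}N^{1/4}\cdot(p\E(\G))^{1/4}\log^{O(1)}p \ll p^{21/34}N^{1/4}\log^{5/34}p,
\]
establishing (\ref{f:main_progr}).
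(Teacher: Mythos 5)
Your Stages 1 and 2 are correct and follow the same route as the paper: (\ref{f:main_S}) is the third bound of Lemma \ref{l:G-inv_bound_F} with $Q=\G$ combined with Theorem \ref{t:Heilbronn_new}, and for (\ref{f:main_progr}) you complete the sum and apply H\"older with exponents $(4,4/3)$ together with $\sum_\xi|\FF{P}(\xi)|^{4/3}\ll pN^{1/3}$ --- exactly inequality (\ref{tmp:27.08.2012_10}). The problem is the step you yourself flag in Stage 3: the whole content of the H\"older step is the bound $\|V\|_4^4=\sum_{\xi\in\Z_p}|\FF{\G}(a+p\xi)|^4\ll p\,\E(\G)$, and you leave it as ``granting this'', proposing to obtain it from cancellation in the off-diagonal of your expansion using Lemma \ref{l:H-K_2/3}. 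That route is unlikely to succeed: by Lemma \ref{l:G_via_f} the off-diagonal terms (with $y_1\equiv y_2 \pmod p$, $y_1\ne y_2$, both nonzero) collapse to $p(p-1)\bigl|\sum_{x\in\Z_p}e(af(x)/p)\bigr|^2$ minus a diagonal correction, i.e.\ to a complete exponential sum of the degree-$(p-1)$ polynomial $f$, for which no useful direct estimate is available here; and Lemma \ref{l:H-K_2/3} controls only level sets of $f$, with no oscillation, so it cannot supply the required cancellation.

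The missing inequality has a short proof using multiplicative structure rather than cancellation. Since $\G$ is a multiplicative subgroup, $\FF{\G}(\gamma\eta)=\FF{\G}(\eta)$ for all $\gamma\in\G$, so $|\FF{\G}|$ is constant on cosets of $\G$ in the unit group $\Z^{*}_{p^2}$ of $\Z_{p^2}$. That group has order $p(p-1)$, so $\G$ has index $p$, and the order-$p$ subgroup $H=\{1+p\xi:\xi\in\Z_p\}$ satisfies $H\cap\G=\{1\}$; hence for $(a,p)=1$ the set $\{a+p\xi:\xi\in\Z_p\}=aH$ meets every coset of $\G$ exactly once. Therefore
$$
\sum_{\xi\in\Z_p}|\FF{\G}(a+p\xi)|^4=\frac{1}{|\G|}\sum_{\eta\in\Z^{*}_{p^2}}|\FF{\G}(\eta)|^4
\le\frac{1}{|\G|}\sum_{\eta\in\Z_{p^2}}|\FF{\G}(\eta)|^4=\frac{p^2\,\E(\G)}{p-1}\ll p\,\E(\G)\,,
$$
with no logarithmic loss and no appeal to Lemma \ref{l:H-K_2/3}. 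Inserting this into your H\"older step closes the argument and yields (\ref{f:main_progr}) exactly as stated.
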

\begin{proof}
One can get (\ref{f:main_S}) just using Theorem \ref{t:Heilbronn_new} and Lemma \ref{l:G-inv_bound_F}.
To obtain (\ref{f:main_progr}) write $P = [M,\dots,N+M]$ and note that
by Fourier transform or the completing method
(see \cite{H} or \cite{H-K} for details) combining with  H\"{o}lder, we get
\begin{equation}\label{tmp:27.08.2012_10}
    \left| \sum_{n=M}^{N+M} e\left( \frac{an^p}{p^2} \right) \right|
        \le
            \frac{1}{p} \sum_x |\FF{\G} (x)| |\FF{P} (x)|
                \le
                    \E^{1/4} (\G) \cdot \left( \frac{1}{p} \sum_x |\FF{P} (x)|^{4/3} \right)^{3/4} \,.
\end{equation}
By assumption $N\le p$.
Using Theorem \ref{t:Heilbronn_new}  to estimate $\E(\G)$ and a well--known estimate
(see e.g. \cite{H-K})
$$
    \sum_x |\FF{P} (x)|^{4/3} \ll p N^{1/3} \,,
$$
we have (\ref{f:main_progr}).
This completes the proof.
$\hfill\Box$
\end{proof}

\bigskip

Using the arguments from \cite{BFKS} and Theorem \ref{t:Heilbronn_new},
we obtain the following result
about Fermat quotients.

\begin{theorem}
        One has
    $$
        l_p \le (\log p)^{\frac{7829}{4284} + o(1)}
    $$
    as $p\to \infty$.
\label{t:Fermat_quatients}
\end{theorem}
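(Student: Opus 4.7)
The plan is to run the argument of \cite{BFKS} essentially verbatim, with one substitution: wherever that paper invokes the short-interval exponential sum estimate implied by Theorem \ref{t:H-K}, I would substitute the sharper bound (\ref{f:main_progr}) coming from Corollary \ref{cor:main}. I would first recall the structural reduction from \cite{BFKS}, then indicate where the new input enters, and finally redo the optimization to obtain the constant $7829/4284$.

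The BFKS reduction begins with the observation that if $l_p > L$, then $q(n) \equiv 0 \pmod p$ for every $n \le L$. Since $q(mn) \equiv q(m) + q(n) \pmod p$, the zero set of $q \bmod p$ is multiplicatively closed and therefore contains every $L$-smooth positive integer; equivalently, each such $n$ lies in $\G$ and satisfies $n^p \equiv n \pmod{p^2}$. This rigidifies sums of the form $T_N(a) = \sum_{n \le N,\, n \text{ is } L\text{-smooth}} e(an^p/p^2)$: on the one hand, $T_N(a)$ coincides with the linear-phase sum $\sum e(an/p^2)$ over the same smooth set, so for an appropriately chosen small $a \neq 0 \pmod p$ one has $|T_N(a)| \gg \Psi(N,L)$, the count of $L$-smooth integers up to $N$.

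On the other hand, \cite{BFKS} bound $T_N(a)$ from above via a bilinear decomposition $n = mk$ with the factors in dyadic ranges, Cauchy--Schwarz in one variable, and then an application of a Heilbronn-type short-interval bound on the inner sum $\sum_k e(\alpha k^p/p^2)$. This is exactly where I would plug in (\ref{f:main_progr}): the input of shape $\ll p^{5/8} K^{1/4}$ used in \cite{BFKS} is replaced by $\ll p^{21/34} K^{1/4} (\log p)^{5/34}$. Combined with de Bruijn's estimate $\Psi(N,L) = N \rho(u)^{1+o(1)}$ for $u = \log N/\log L$, this yields an inequality relating $L$ to $p$; setting $L = (\log p)^{\alpha}$ and optimizing the free parameters (the length $N$ and the dyadic decomposition) produces the largest admissible $\alpha$.

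The main obstacle is purely bookkeeping: carrying the optimization through with the new exponent triple $(21/34,\, 1/4,\, 5/34)$ in place of $(5/8,\, 1/4,\, 0)$. As a sanity check, redoing the same computation with the old exponents should reproduce the BFKS constant $463/252$; and the logarithmic factor $(\log p)^{5/34}$ should be absorbed into the $o(1)$ term in the final exponent. The only genuine effort is to verify that no additional re-balancing of free parameters is needed beyond the substitution of exponents, which then drives the constant from $463/252$ down to $7829/4284$.
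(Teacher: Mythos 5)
Your proposal coincides with the paper's own (one-line) proof: the paper likewise just invokes the argument of \cite{BFKS} with the improved Heilbronn bound from Theorem \ref{t:Heilbronn_new} (via Corollary \ref{cor:main}) substituted for the Heath--Brown--Konyagin input, the logarithmic loss being absorbed into the $o(1)$. The numerology confirms your substitution is the intended one: the final exponent depends linearly, with slope $\frac{4}{3}$, on the exponent $\theta$ in the short-interval bound $p^{\theta}N^{1/4}$, and indeed $\frac{463}{252}+\frac{4}{3}\left(\frac{21}{34}-\frac{5}{8}\right)=\frac{7871}{4284}-\frac{42}{4284}=\frac{7829}{4284}$.
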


Note that
$$
    \frac{463}{252} = 1.83730 \dots \quad \quad \mbox{ and} \quad \quad \frac{7829}{4284} = 1.82749 \dots
$$

\bigskip

Theorem \ref{t:Fermat_quatients}
has a consequence (see \cite{Lenstra}).

\begin{corollary}
    For every $\eps > 0$ and a sufficiently large integer $n$,
    if $a^{n-1} \equiv 1 \pmod n$
    for every positive integer $a \le (\log p)^{\frac{7829}{4284} + \eps}$
     then $n$ is squarefree.
\end{corollary}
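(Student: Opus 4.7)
The plan is to argue by contradiction via Theorem \ref{t:Fermat_quatients}. I assume that $n$ fails to be squarefree, fix a prime $p$ with $p^2\mid n$, and aim to produce some $a\le X:=(\log n)^{\frac{7829}{4284}+\eps}$ for which $a^{n-1}\not\equiv 1\pmod n$ (I read $\log p$ in the corollary as a typo for $\log n$, since $p$ is not yet defined).

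First I would note that $a^{n-1}\equiv 1\pmod n$ forces $\gcd(a,n)=1$, so the hypothesis implies that \emph{every} integer $a\le X$ is coprime to $n$; in particular every prime factor of $n$ exceeds $X$, and hence $p>X$.

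Next comes the standard Lenstra reduction \cite{Lenstra} to Fermat quotients. For any $a\le X$, let $d$ denote the order of $a$ modulo $p^2$. From $a^{n-1}\equiv 1\pmod{p^2}$ and $d\mid\varphi(p^2)=p(p-1)$ we obtain $d\mid\gcd(n-1,p(p-1))$; but $p\mid n$ gives $\gcd(n-1,p)=1$, so in fact $d\mid p-1$. Thus $a^{p-1}\equiv 1\pmod{p^2}$, which is by definition equivalent to $q(a)\equiv 0\pmod p$. Consequently every $a\le X$ satisfies $q(a)\equiv 0\pmod p$.

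Finally, Theorem \ref{t:Fermat_quatients} yields $l_p\le(\log p)^{\frac{7829}{4284}+o(1)}$ as $p\to\infty$, and since $p^2\mid n$ implies $p\le\sqrt n$, one has $\log p\le\tfrac12\log n$. For all sufficiently large $n$ the $o(1)$ in the exponent is therefore absorbed into $\eps$, giving $l_p\le X$. Applying the previous paragraph at $a=l_p$ then yields $q(l_p)\equiv 0\pmod p$, contradicting the very definition of $l_p$; hence $n$ must be squarefree. I expect no real obstacle here: the order-divisibility step is elementary, and Theorem \ref{t:Fermat_quatients} does all the genuine work; the only point demanding a moment's care is the coprimality observation in the first paragraph, which rules out the parasitic possibility that the hypothesis fails trivially because some small $a$ shares a prime factor with $n$.
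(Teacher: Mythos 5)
Your argument is correct and is exactly the intended one: the paper gives no proof of this corollary but simply cites Lenstra's reduction, which is precisely the order-divisibility argument you reconstruct (including the coprimality observation and the absorption of the $o(1)$ using $p>X\to\infty$). Your reading of $\log p$ as $\log n$ in the statement is also the right one, matching the analogous corollary in \cite{BFKS}.
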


Similar improvement of constant $\frac{463}{252}$ of Theorem 1 from \cite{Shp-FermVal}
as well further applications from \cite{Shp-Ihara}
can be obtained exactly the same way.
New values of constants in paper
\cite{Chang_Fermat} follows from the proposition below
(compare with Proposition 2.1 of article \cite{Chang_Fermat}) and our Theorem \ref{t:Heilbronn_new}.

\begin{proposition}
    For $\xi \in \Z_p$ define
    $$
        u(\xi) = |\{ x\in [p] ~:~ x^p-x \equiv \xi p \pmod {p^2} \}| \,.
    $$
    Then for any $\eps>0$ and all sufficiently large $p$ the following holds
    $$
        \sum_\xi u^2 (\xi)
                \le
            p^{\frac{1}{8}+\eps} \E^{1/2} (\G) \,.
    $$
\end{proposition}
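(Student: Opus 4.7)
The plan is to rewrite the counting quantity $u(\xi)$ in terms of the subgroup $\G$ and then bound the resulting Fourier-type expression via Lemma~\ref{l:G-inv_bound_F}. Recall that for $x\in [p-1]$ one has $x^p\equiv x+p\cdot xq(x)\pmod{p^2}$, so the congruence $x^p-x\equiv \xi p\pmod{p^2}$ becomes $xq(x)\equiv\xi\pmod p$. Parametrising $\G$ bijectively by $x\mapsto x^p$ and letting $\pi^*:\G\to\{1,\dots,p-1\}$ denote the integer representative of reduction modulo $p$, this gives $u(\xi)=|\G\cap B_\xi|$ where $B_\xi:=\{n+\xi p:1\le n\le p-1\}\subset \Z_{p^2}$. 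Consequently
$$\sum_\xi u^2(\xi)=\sum_{a_1\in\Z_p}|\G\cap B_{a_1}|^2=\#\{(a,b)\in\G\times\G\,:\,a-b\equiv \pi^*(a)-\pi^*(b)\pmod{p^2}\}.$$

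Next I would Fourier-expand on $\Z_{p^2}$. Writing $C=\{1,\dots,p-1\}$ and using the shift identity $\widehat{B_\xi}(\eta)=e_p(-\eta\xi)\widehat C(\eta)$ together with Parseval, a routine calculation reduces the quantity to a sum over a single $\Z_p$-parameter:
$$\sum_\xi u^2(\xi)=\frac{1}{p^3}\sum_{\eta_0\in\Z_p}|H(\eta_0)|^2,\qquad H(\eta_0)=\sum_{k\in\Z_p}\widehat\G(\eta_0+pk)\,\overline{\widehat C(\eta_0+pk)}.$$
The contribution of $\eta_0=0$ is of order $p$ using $\widehat\G(pk)=-1$ for $k\ne0$ and $\widehat C(pk)=-1$ for $k\ne0$, which is absorbed by the target bound.

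For $\eta_0\ne0$, each frequency $\eta_0+pk$ is nonzero, so the third estimate in Lemma~\ref{l:G-inv_bound_F} applied to $Q=\G$ in $\Z_{p^2}$ gives $|\widehat\G(\eta)|\le p^{1/4}\E^{1/4}(\G)$. Combining this with the sharp $L^1$-bound $\sum_k|\widehat C(\eta_0+pk)|\ll p\log p$, which follows from $\widehat C$ being a truncated geometric sum whose modulus decays like $\min\{p,\,p^2/|\eta|_{p^2}\}$, one readily obtains the intermediate inequality
$\sum_\xi u^2(\xi)\ll p^{1/2+o(1)}\E^{1/2}(\G)$. This is already nontrivial but exceeds the proposition's target by a factor $p^{3/8}$.

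To save the remaining $p^{3/8}$, I would sharpen Step 3 by a dyadic decomposition. Let $L_T=\{\xi:T<u(\xi)\le 2T\}$ and set $Q_T=\bigcup_{\xi\in L_T}B_\xi$. The $\G$-saturation $\G\cdot Q_T$ is $\G$-invariant, and one can estimate $|\widehat{Q_T}(\eta)|$ by applying the third bound of Lemma~\ref{l:G-inv_bound_F} with $|\G\cdot Q_T|\ll |L_T|p$ and $\E(\G\cdot Q_T)$ controlled via $\E(\G)$; on the other hand $|\G\cap Q_T|\ge T|L_T|$ by definition, which forces $|L_T|\ll p^{1/4+\eps}\E^{1/2}(\G)/T^2$ after balancing. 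Summing $\sum_T T^2|L_T|$ dyadically and using $\sum_\xi u(\xi)=|\G|\ll p$ to cut off small $T$ yields the claimed exponent $1/8+\eps$. The main obstacle is the simultaneous balancing in this final step: one must calibrate the three bounds of Lemma~\ref{l:G-inv_bound_F}, use the higher-moment estimate $\E_3(\G)\ll|\G|^3\log|\G|$ from Corollary~\ref{cor:E_l} to control $\E(\G\cdot Q_T)$, and verify that the $\G$-invariant saturation does not inflate the relevant quantities by more than a $p^\eps$ factor.
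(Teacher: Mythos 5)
First, a caveat: the paper states this proposition without proof (it only points to Proposition 2.1 of \cite{Chang_Fermat}), so there is no in-text argument to compare against; I can only assess your proposal on its own terms. Your Steps 1--3 are sound: the identification $u(\xi)=|\G\cap(C+\xi p)|$, the reduction to $\frac{1}{p^3}\sum_{\eta_0}|H(\eta_0)|^2$, the treatment of $\eta_0=0$, and the combination of the third bound of Lemma~\ref{l:G-inv_bound_F} with $\sum_k|\FF{C}(\eta_0+pk)|\ll p\log p$ do give $\sum_\xi u^2(\xi)\ll p^{1/2+o(1)}\E^{1/2}(\G)$. But that is where the honest content ends, and the entire difficulty of the proposition --- the saving of $p^{3/8}$ --- is concentrated in your final paragraph, which is not a proof.

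Concretely, three things go wrong there. (a) The assertion $|\G\cdot Q_T|\ll|L_T|\,p$ is unjustified and generally false: $Q_T$ is a union of additively structured blocks $C+\xi p$, and its multiplicative saturation $\G\cdot Q_T$ is a union of $|\G|$ dilates which can have size as large as $\min\{|\G||Q_T|,\,p^2\}$; since $|Q_T|\geq p$, the saturation can be essentially all of $\Z_{p^2}^*$, an inflation by a factor $p$, not $p^{\eps}$. Relatedly, once you pass to $\G\cdot Q_T$ you lose the lower bound $|\G\cap Q_T|\geq T|L_T|$ in any useful form, and the proposed control of $\E(\G\cdot Q_T)$ by $\E(\G)$ and $\E_3(\G)$ is nowhere derived. (b) Even if one grants your target inequality $|L_T|\ll p^{1/4+\eps}\E^{1/2}(\G)/T^2$, the conclusion does not follow: $\sum_T T^2|L_T|\ll p^{1/4+\eps}\E^{1/2}(\G)\log p$, and the cutoff at small $T$ coming from $\sum_\xi u(\xi)\leq p$ cannot reduce the exponent $1/4$ to $1/8$, because the bound on $T^2|L_T|$ is uniform in $T$. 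So by your own arithmetic the scheme lands at $p^{1/4+\eps}\E^{1/2}(\G)$ at best. (c) More broadly, every estimate in your proposal ultimately routes the saving through $\max_{\eta\neq 0}|\FF{\G}(\eta)|^2\leq p^{1/2}\E^{1/2}(\G)$, and no rearrangement of Cauchy--Schwarz and $L^1$ bounds for $\FF{C}$ can push that below $p^{1/2}\E^{1/2}(\G)$; reaching $p^{1/8}$ requires a genuinely different input (in \cite{Chang_Fermat} this is where the specific arithmetic of the level sets, via Lemma~\ref{l:G_via_f} and Lemma~\ref{l:H-K_2/3}, rather than only the $\G$-invariance, is exploited). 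As it stands the proposal proves a weaker statement and gestures at, but does not supply, the idea needed for the stated exponent.
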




\bigskip

\no{Division of Algebra and Number Theory,\\ Steklov Mathematical
Institute,\\
ul. Gubkina, 8, Moscow, Russia, 119991}
\\
and
\\
Delone Laboratory of Discrete and Computational Geometry,\\
Yaroslavl State University,\\
Sovetskaya str. 14, Yaroslavl, Russia, 150000
\\
and
\\
IITP RAS,  \\
Bolshoy Karetny per. 19, Moscow, Russia, 127994\\
{\tt ilya.shkredov@gmail.com}

\end{document}